\DeclareMathOperator{\rk}{rank}
\DeclareMathOperator{\topo}{top}
\DeclareMathOperator{\der}{deR}
\DeclareMathOperator{\todd}{Todd}
\DeclareMathOperator{\BS}{BS}
\DeclareMathOperator{\an}{an}
\DeclareMathOperator{\ind}{ind}
\DeclareMathOperator{\LL}{L}
\DeclareMathOperator{\ST}{ST}
\DeclareMathOperator{\FL}{FL}
\DeclareMathOperator{\SF}{SF}
\DeclareMathOperator{\exact}{exact}
\DeclareMathOperator{\SSSS}{SS}
\DeclareMathOperator{\st}{Struct}
\DeclareMathOperator{\spin}{Spin}
\DeclareMathOperator{\odd}{odd}
\DeclareMathOperator{\even}{even}
\DeclareMathOperator{\im}{Im}
\DeclareMathOperator{\ch}{ch}
\DeclareMathOperator{\U}{U}
\DeclareMathOperator{\BU}{BU}
\DeclareMathOperator{\tr}{tr}
\DeclareMathOperator{\CS}{CS}
\DeclareMathOperator{\cs}{cs}
\DeclareMathOperator{\dirlim}{\underrightarrow{\lim}}
\begin{document}
\setlength{\baselineskip}{1.4\baselineskip}
\newtheorem{defi}{Definition}
\newtheorem{lemma}{Lemma}
\newtheorem{remark}{Remark}
\newtheorem{coro}{Corollary}
\newtheorem{exam}{Example}
\newtheorem{thm}{Theorem}
\newtheorem{prop}{Proposition}
\newcommand{\wt}[1]{{\widetilde{#1}}}
\newcommand{\ov}[1]{{\overline{#1}}}
\newcommand{\wh}[1]{{\widehat{#1}}}
\newcommand{\poin}{Poincar$\acute{\textrm{e }}$}
\newcommand{\deff}[1]{{\bf\emph{#1}}}
\newcommand{\boo}[1]{\boldsymbol{#1}}
\newcommand{\abs}[1]{\lvert#1\rvert}
\newcommand{\norm}[1]{\lVert#1\rVert}
\newcommand{\inner}[1]{\langle#1\rangle}
\newcommand{\poisson}[1]{\{#1\}}
\newcommand{\biginner}[1]{\Big\langle#1\Big\rangle}
\newcommand{\set}[1]{\{#1\}}
\newcommand{\Bigset}[1]{\Big\{#1\Big\}}
\newcommand{\BBigset}[1]{\bigg\{#1\bigg\}}
\newcommand{\dis}[1]{$\displaystyle#1$}
\newcommand{\R}{\mathbb{R}}
\newcommand{\N}{\mathbb{N}}
\newcommand{\Z}{\mathbb{Z}}
\newcommand{\Q}{\mathbb{Q}}
\newcommand{\E}{\mathcal{E}}
\newcommand{\G}{\mathcal{G}}
\newcommand{\F}{\mathcal{F}}
\newcommand{\V}{\mathcal{V}}
\newcommand{\W}{\mathcal{W}}
\newcommand{\SSS}{\mathcal{S}}
\newcommand{\h}{\mathbb{H}}
\newcommand{\g}{\mathfrak{g}}
\newcommand{\C}{\mathbb{C}}
\newcommand{\A}{\mathcal{A}}
\newcommand{\M}{\mathcal{M}}
\newcommand{\HH}{\mathcal{H}}
\newcommand{\D}{\mathcal{D}}
\newcommand{\PP}{\mathcal{P}}
\newcommand{\K}{\mathcal{K}}
\newcommand{\RR}{\mathcal{R}}
\newcommand{\RRR}{\mathscr{R}}
\newcommand{\DDD}{\mathscr{D}}
\newcommand{\so}{\mathfrak{so}}
\newcommand{\gl}{\mathfrak{gl}}
\newcommand{\uu}{\mathfrak{u}}
\newcommand{\aaa}{\mathbb{A}}
\newcommand{\bbb}{\mathbb{B}}
\newcommand{\DD}{\mathsf{D}}
\newcommand{\ccc}{\bold{c}}
\newcommand{\sss}{\mathbb{S}}
\newcommand{\cdd}[1]{\[\begin{CD}#1\end{CD}\]}
\normalsize
\title[The Simons-Sullivan differential analytic index]{The differential analytic
index in Simons-Sullivan differential $K$-theory}
\author{Man-Ho Ho}
\address{Department of Mathematics and Statistics\\ Boston University}
\email{homanho@bu.edu}
\subjclass[2010]{Primary 19L50; Secondary 19K56}
\maketitle
\nocite{*}
\begin{center}
\emph{\small Dedicated to my father Kar-Ming Ho}
\end{center}
\begin{abstract}
We define the Simons-Sullivan differential analytic index by translating the
Freed-Lott differential analytic index via explicit ring isomorphisms between
Freed-Lott differential $K$-theory and Simons-Sullivan differential $K$-theory.
We prove the differential Grothendieck-Riemann-Roch theorem in Simons-Sullivan
differential $K$-theory using a theorem of Bismut.
\end{abstract}
\tableofcontents
\section{Introduction}
\noindent As explained in \cite{BS09}, \cite{BS10a}, \cite{DF00}, \cite{HS05}, the
physics motivation for differential $K$-theory is a quantum field theory whose
Largrangian has differential form field strength. This leads to a generalized
cohomology theory with a map to ordinary cohomology that implements charge quantization.
In \cite{DF00} Freed argued that there should be a similar extension of topological
$K$-theory. We refer to \cite[$\S 1.4$]{FL10} for a historical discussion. The
mathematical motivation for differential $K$-theory can be traced to Cheeger-Simons
differential characters \cite{CS85}, the unique differential extension of ordinary
cohomology \cite{SS08a}, and to the work of Karoubi \cite{K86}. It is thus natural to
look for differential extensions of generalized cohomology theories, for example
topological $K$-theory. The differential extension of topological $K$-theory is now
known as differential $K$-theory. Roughly speaking, differential $K$-theory is
topological $K$-theory combined with differential form data in a complicated way,
just as differential characters combine ordinary cohomology with differential form
data. Various definitions of differential $K$-theory have been proposed, notably by
Bunke-Schick \cite{BS09}, Freed-Lott \cite{FL10}, Hopkins-Singer \cite{HS05} and
Simons-Sullivan \cite{SS10}. Axioms for differential extensions of generalized
cohomology theories are given in \cite{BS10a}, where it is shown that two differential
extensions of a fixed generalized cohomology theory satisfying certain conditions are
uniquely isomorphic. In particular the four models of differential $K$-theory mentioned
above are isomorphic by this abstract result. For more details and an introduction to
differential $K$-theory, see \cite{BS10}.\\\\
The Atiyah-Singer family index theorem can be formulated as the equality of the analytic
and topological pushforward maps $\ind^{\an}=\ind^{\topo}:K(X)\to K(B)$. Applying the
Chern character, we get the Grothendieck-Riemann-Roch theorem, the commutativity of the
following diagram
\cdd{K(X) @>\ch>> H^{\even}(X; \Q) \\ @V\ind^{\an}VV @VV\int_{X/B}\todd(X/B)\cup(\cdot) V
\\ K(B) @>>\ch> H^{\even}(B; \Q)}
Analogous theorems hold in differential $K$-theory. Bunke-Schick prove the differential
Grothendieck-Riemann-Roch theorem (dGRR) \cite[Theorem 6.19]{BS09}, i.e., for a proper
submersion $\pi:X\to B$ of even relative dimension, the following diagram is commutative:
\cdd{\wh{K}_{\BS}(X) @>\wh{\ch}_{\BS}>> \wh{H}^{\even}(X; \R/\Q) \\
@V\ind^{\an}_{\BS}VV @VV\wh{\int_{X/B}}\wh{\todd}(\wh{\nabla}^{T^VX})\ast(\cdot)
V \\ \wh{K}_{\BS}(B) @>>\wh{\ch}_{\BS}> \wh{H}^{\even}(B; \R/\Q)}
where $\wh{H}(X; \R/\Q)$ is the ring of differential characters \cite{CS85},
$\wh{\ch}_{\BS}$ is the differential Chern character \cite[\S 6.2]{BS09}, $\ind^{\an}_{\BS}$
is the Bunke-Schick differential analytic index \cite[\S 3]{BS09} and \dis{\wh{\int_{X/B}}
\wh{\todd}(\wh{\nabla}^{T^VX})\ast} is a modified pushforward of differential characters
\cite[\S 6.4]{BS09}. The notation is explained more fully in later sections. Freed-Lott
prove the differential family index theorem \cite[Theorem 7.32]{FL10} $\ind^{\an}_{\FL}
=\ind^{\topo}_{\FL}:\wh{K}_{\FL}(X)\to\wh{K}_{\FL}(B)$, where $\ind^{\an}_{\FL}$ and
$\ind^{\topo}_{\FL}$ are the Freed-Lott differential analytic index
\cite[Definition 3.11]{FL10} and the differential topological index \cite[Definition 5.33]{FL10}.
Applying the differential Chern character $\wh{\ch}_{\FL}$ yields the dGRR
\cite[Corollary 8.23]{FL10}. Since $\ind^{\an}_{\BS}=\ind^{\an}_{\FL}$
\cite[Corollary 5.5]{BS09}, the two dGGR theorems are essentially the same. See
\cite{H11a} for a short proof of the dGRR.\\\\
To this point, the differential index theorem formulated in Simons-Sullivan
differential $K$-theory has not appeared. The purpose of this paper is to fill this gap
by both defining the differential analytic index and proving the dGRR in Simons-Sullivan
differential $K$-theory.\\\\
The first main result of this paper (Theorem \ref{thm 1}) is the construction of
explicit ring isomorphisms between Simons-Sullivan differential $K$-theory and
Freed-Lott differential $K$-theory. While these theories must be isomorphic by
\cite[Theorem 3.10]{BS10a}, the explicit isomorphisms have not been appeared in
literature as far as we know. Moreover, it follows from Corollary \ref{coro 1} that the
flat parts of these differential $K$-theories are also isomorphic via the restriction
of the explicit ring isomorphisms in Theorem \ref{thm 1}. This result is a more explicit
version of \cite[Theorem 5.5]{BS10a} in this case. The advantage of these explicit ring
isomorphisms is that we see which elements in these differential $K$-groups correspond
to each other.\\\\
The second main result of this paper is the dGRR in Simons-Sullivan differential
$K$-theory. We first define the Simons-Sullivan differential analytic index by
translating the Freed-Lott analytic index via the explicit isomorphisms in Theorem
\ref{thm 1}. To be precise, we study the special case where the family of kernels
$\ker(D^E)$ forms a superbundle. The general case follows from a standard perturbation
argument as in \cite[\S 7]{FL10}. The Simons-Sullivan differential analytic index
of an element $\E=[E, h^E, [\nabla^E]]\in\wh{K}_{\SSSS}(X)$, in the special case,
is given by
$$\ind^{\an}_{\SSSS}(\E)=[\ker(D^E), h^{\ker(D^E)}, [\nabla^{\ker(D^E)}]]
+[V, h^V, [\nabla^V]]-[\dim(V), h, [d]],$$
where $[V, h^V, [\nabla^V]]:=\wh{\CS}^{-1}(\wt{\eta}(\E))$, and all the terms will be
explained below. The general case of $\ind^{\an}_{\SSSS}(\E)$ is given by a similar
formula. This formula is considerably more complicated than the Freed-Lott differential
analytic index. This indicates that Simons-Sullivan differential $K$-theory is not the
easiest setting for differential index theory, although the Simons-Sullivan construction
of the differential $K$-group is perhaps the simplest among all the existing ones. We
then prove the dGRR (Theorem \ref{iaithm 3}) in the special case, i.e., the
commutativity of the following diagram
\cdd{\wh{K}_{\SSSS}(X) @>\wh{\ch}_{\SSSS}>> \wh{H}^{\even}(X; \R/\Q) \\
@V\ind^{\an}_{\SSSS}VV @VV\wh{\int_{X/B}}\wh{\todd}(\wh{\nabla}^{T^VX})\ast(\cdot) V \\
\wh{K}_{\SSSS}(B) @>>\wh{\ch}_{\SSSS}> \wh{K}_{\SSSS}(B)}
in Simons-Sullivan differential $K$-theory, using a theorem of Bismut
\cite[Theorem 1.15]{B05}. The general case of the dGRR follows by a similar argument,
since \cite[Theorem 1.15]{B05} can be extended to the general case.\\\\
In principle all the theorems and proofs can be transported from Freed-Lott differential
$K$-theory to Simons-Sullivan differential $K$-theory by the explicit isomorphisms given
by Theorem \ref{thm 1}. However, with \cite[Theorem 1.15]{B05} the proof of the dGRR is
easier.\\\\
The paper is organized as follows: the next two sections contain all the necessary
background material. Section 2 reviews Simons-Sullivan differential $K$-theory. Section
3 reviews Freed-Lott differential $K$-theory and the construction of the Freed-Lott
differential analytic index. The main results of the paper are proved in Section 5,
including the explicit isomorphisms between Simons-Sullivan differential $K$-theory
and Freed-Lott differential $K$-theory, the formula for the differential analytic
index in Simons-Sullivan differential $K$-theory and the dGRR.
\section*{Acknowledgement}
The author would like to thank Steven Rosenberg for many stimulating discussions of this
problem and the referee for many helpful suggestions.
\section{Simons-Sullivan differential $K$-theory}
In this section we review Simons-Sullivan differential $K$-theory \cite{SS10}. For
our purpose, we use the Hermitian version of structured bundles instead of the complex
version. Consider a triple $(V, h, \nabla)$, where $V\to X$ is a Hermitian vector
bundle over a compact manifold $X$ with a Hermitian metric $h$ and a unitary connection
$\nabla$. Recall that the Chern character form $\ch(\nabla)\in\Omega^{\even}(X; \R)$
and the Chern-Simons transgression form $\cs(\nabla^t)\in\Omega^{\odd}(X; \R)$ of two
connections $\nabla^0$, $\nabla^1$ on $V\to X$ joined by a smooth curve $\nabla^t$ of
connections are related by the equality
\begin{equation}\label{eq 5}
d\cs(\nabla^t)=\ch(\nabla^1)-\ch(\nabla^0).
\end{equation}
Define
$$\CS(\nabla^0, \nabla^1):=\cs(\nabla^t)\mod\im(d:\Omega^{\even}(X)\to\Omega^{\odd}(X)),$$
where $\nabla^t$ is a smooth curve joining the connection $\nabla^1$ and $\nabla^0$.
Since $\cs(\nabla^t)$ only depends on the curve joining the connections up to an
exact form \cite[Proposition 1.6]{SS10}, $\CS(\nabla^0, \nabla^1)$ is well defined
\footnote{It follows from (\ref{eq 5}) that $d\CS(\nabla^0, \nabla^1)=\ch(\nabla^1)-
\ch(\nabla^0)$. There are other sign convention, for example see \cite{FL10}. We will
use the convention $d\CS(\nabla^0, \nabla^1)=\ch(\nabla^1)-\ch(\nabla^0)$ in this paper.}.\\\\
For two connections $\nabla^0$, $\nabla^1$ on $V\to X$, we set $\nabla^0\sim\nabla^1$ if
and only if $\CS(\nabla^0, \nabla^1)=0$. $\sim$ is an equivalence relation.\\\\
The triple $\V=(V, h, [\nabla])$ is called a (Hermitian) structured bundle. Two
structured bundles $\V=(V, h^V, [\nabla^V])$ and $\W=(W, h^W, [\nabla^W])$ are isomorphic
if there exists a vector bundle isomorphism $\sigma:V\to W$ such that $\sigma^*h^W=h^V$
and $\sigma^*([\nabla^W])=[\nabla^V]$. Denote by $\st(X)$ the set of all isomorphism
classes of structured bundles. Direct sum and tensor product of structured bundles are
well-defined \cite{SS10}, so $\st(X)$ is an abelian semi-ring.\\\\
The Simons-Sullivan differential $K$-group is defined to be
$$\wh{K}_{\SSSS}(X)=K(\st(X)).$$
Thus, Simons-Sullivan differential $K$-theory is a $K$-theory of vector bundles with
connections.\\\\
To be precise, $[\V_1]-[\W_1]=[\V_2]-[\W_2]$ in $\wh{K}_{\SSSS}(X)$ if and only if
there exists a structured bundle $(G, h^G, [\nabla^G])\in\st(X)$ such that $V_1\oplus
W_2\oplus G\cong W_1\oplus V_2\oplus G$ and $\CS(\nabla^{V_1}\oplus\nabla^{W_2}\oplus
\nabla^G, \nabla^{V_2}\oplus\nabla^{W_1}\oplus\nabla^G)=0$.\\\\
Define
\begin{displaymath}
\begin{split}
\st_{\ST}(X)&=\set{\V\in\st(X)|V\textrm{ is stably trivial}}\\
\st_{\SF}(X)&=\set{\V\in\st(X)|\V\oplus\F\cong\HH}
\end{split}
\end{displaymath}
where $\F\to X$ and $\HH\to X$ are flat structured bundles. Elements in $\st_{\SF}(X)$
are said to be stably flat. Let $\U:=\dirlim\U(n)$. Denote by $\theta\in\Omega^1(\U,
\uu)$ the canonical left invariant $\uu$-valued form on $\U$. Define
\begin{displaymath}
\begin{split}
b_j&=\frac{1}{(j-1)!}\bigg(\frac{1}{2\pi i}\bigg)^j\int^1_0(t^2-t)^{j-1}dt, j\in\N\\
\Theta&=\sum_{j=1}b_j\tr(\overbrace{\theta\wedge\cdots\wedge\theta}^{2j-1})\in
\Omega^{\odd}(\U)
\end{split}
\end{displaymath}
Then define
\begin{displaymath}
\begin{split}
\Omega_{\U}(X)&=\set{g^*(\Theta)+d\beta|g:X\to\U, \beta\in\Omega^{\even}(X)}\\
\Omega^\bullet_{\BU}(X)&=\set{\omega\in\Omega^\bullet_{d=0}(X)|[\omega]\in\im(\ch:
K^{-(\bullet\mod 2)}(X)\to H^\bullet(X; \Q))}.
\end{split}
\end{displaymath}
where $\bullet\in\set{\even, \odd}$. The so-called Venice lemma in \cite{SS10} shows
that the map \dis{\wh{\CS}:\frac{\st_{\ST}(X)}{\st_{\SF}(X)}\to\frac{\Omega^{\odd}
(X)}{\Omega_{\U}(X)}} defined by\footnote{This definition differs from the one
in \cite[Proposition 2.4]{SS10} by a sign.}
$$\wh{\CS}(\V):=\CS(\nabla^V\oplus\nabla^F, \nabla^H)\mod\frac{\Omega_{\U}
(X)}{\Omega^{\odd}_{\exact}(X)}$$
is an isomorphism, where $F\to X$ and $H\to X$ are trivial bundles over $X$ such that
$H\cong V\oplus F$ and $\nabla^F$, $\nabla^H$ are flat connections on $F$, $H$,
respectively. Also, the homomorphism
$$\Gamma:\frac{\st_{\ST}(X)}{\st_{\SF}(X)}\to\wh{K}_{\SSSS}(X)$$
defined by $\Gamma(\V)=[\V]-[\dim(\V)]$ is injective, for $\dim(\V)$ the trivial
structured bundle of rank $V$ with the trivial metric and connection. Thus the
homomorphism
$$i:\frac{\Omega^{\odd}(X)}{\Omega_{\U}(X)}\to\wh{K}_{\SSSS}(X)$$
defined by $i(\phi)=\Gamma\circ\wh{\CS}^{-1}(\phi)$ is injective. If we pick
$\V\in\wh{\CS}^{-1}(\phi)$, then $\V$ is a stably trivial structured bundle and
$$d\phi=d\CS(\nabla^V\oplus\nabla^F, \nabla^H)=\ch(\nabla^V)-\rk(V)\mod
\frac{\Omega_{\U}(X)}{\Omega^{\odd}_{\exact}(X)}$$
is independent of the choice of $\V$.\\\\
In the following hexagon the diagonal and the off-diagonal sequences are exact, and
every square and triangle commutes:
\begin{equation}\label{eq a}
\xymatrix{\scriptstyle 0 \ar[dr] & \scriptstyle & \scriptstyle & \scriptstyle &
\scriptstyle 0 \\ & \scriptstyle K^{-1}_{\SSSS}(X; \R/\Z) \ar[rr]^{B} \ar[dr]^j &
\scriptstyle & \scriptstyle K(X) \ar[ur] \ar[dr]^{\ch} & \scriptstyle \\ \scriptstyle
H^{\odd}(X; \R) \ar[ur]^{i\circ\der} \ar[dr]_{\der} & \scriptstyle & \scriptstyle
\wh{K}_{\SSSS}(X) \ar[ur]^{\delta} \ar[dr]^{\ch_{\wh{K}_{\SSSS}}} & \scriptstyle &
\scriptstyle H^{\even}(X; \R) \\ \scriptstyle & \scriptstyle
\frac{\Omega^{\odd}(X)}{\Omega_{\U}(X)} \ar[rr]_{d} \ar[ur]^i & \scriptstyle
& \scriptstyle \Omega^{\even}_{\BU}(X) \ar[ur]_{\der} \ar[dr] & \scriptstyle \\
\scriptstyle 0 \ar[ur] & \scriptstyle & \scriptstyle & \scriptstyle & \scriptstyle 0}
\end{equation}
In \cite{SS10} the homomorphism $\ch_{\wh{K}_{\SSSS}}:\wh{K}_{\SSSS}(X)\to
\Omega^{\even}_{\BU}(X)$ is just denoted by $\ch$, which is a well defined lift of the
Chern character form of a connection on a vector bundle to elements in $\wh{K}_{\SSSS}
(X)$. We use the notation $\ch_{\wh{K}_{\SSSS}}$ in order to keep track of the Chern
character in different usage.
\begin{remark}\label{remarkiai 1}
{\emph{We show that $\Omega_{\U}(X)=\Omega^{\odd}_{\BU}(X)$, and we will use
this identification throughout this paper. This is implicitly stated in
\cite[Diagram 1]{SS10a}. We include the easy proof here for completeness. Let $d$ be
the trivial connection on the trivial bundle $X\times\C^N\to X$ for some $N\in\N$.
By the proof of \cite[Lemma 2.3]{SS10}, the connection $d+g^*(\theta)$ on $X\times
\C^N\to X$, where $g:X\to\U$ is an arbitrary but fixed smooth map, has trivial holonomy.
Following the proof of \cite[Lemma 2.3]{SS10}, we have $g^*(\Theta)=\CS(d, d+g^*
(\theta))=\CS(d, d+g^{-1}dg)=:\ch^{\odd}([g])$, so $\Omega_{\U}(X)\Omega^{\odd}_{\BU}
(X)$.}}
\end{remark}
\section{Freed-Lott differential $K$-theory}
\noindent In this section we review Freed-Lott differential $K$-theory \cite{FL10}.
If
\begin{equation}\label{eq 1}
\xymatrix{0 \ar[r] & E_1 \ar[r]^i & E_2 \ar@/^/[r]^j & E_3 \ar[r] \ar@/^/[l]^s & 0}
\end{equation}
is a split short exact sequence of complex vector bundles with connections $\nabla_i$
on $E_i\to X$, for $i=1, 2, 3$, we define the relative Chern-Simons transgression form
\dis{\CS(\nabla_1, \nabla_2, \nabla_3)\in\frac{\Omega^{\odd}(X)}{\im(d)}} by
$$\CS(\nabla_1, \nabla_2, \nabla_3):=\CS((i\oplus s)^*\nabla_2, \nabla_1\oplus\nabla_3),$$
noting that $i\oplus s:E_1\oplus E_3\to E_2$ is a vector bundle isomorphism.\\\\
The Freed-Lott differential $K$-group $\wh{K}_{\FL}(X)$ is defined to be the abelian
group with the following generators and relation: a generator of $\wh{K}_{\FL}(X)$ is
a quadruple $\E=(E, h, \nabla, \phi)$, where $(E, h, \nabla)$ is as before and
\dis{\phi\in\frac{\Omega^{\odd}(X)}{\im(d)}}. The only relation is $\E_2=\E_1+\E_3$
if and only if there exists a short exact sequence of Hermitian vector bundles
(\ref{eq 1}) and
$$\phi_2=\phi_1+\phi_3-\CS(\nabla_1, \nabla_2, \nabla_3).$$
For $\E_1, \E_2\in\wh{K}_{\FL}(X)$, the addition
$$\E_1+\E_2:=(E_1\oplus E_2, h^{E_1}\oplus h^{E_2}, \nabla^{E_1}\oplus\nabla^{E_2},
\phi_1+\phi_2)$$
is well defined. Note that $\E_1=\E_2$ if and only if there exists $(F, h^F, \nabla^F,
\phi^F)\in\wh{K}_{\FL}(X)$ such that
\begin{enumerate}
  \item $E_1\oplus F\cong E_2\oplus F$, and
  \item $\phi_1-\phi_2=\CS(\nabla^{E_2}\oplus\nabla^F, \nabla^{E_1}\oplus\nabla^F)$,
\end{enumerate}
The Freed-Lott differential Chern character $\wh{\ch}_{\FL}:\wh{K}_{\FL}(X)\to
\wh{H}^{\even}(X; \R/\Q)$ is defined by
$$\wh{\ch}_{\FL}(\E)=\wh{\ch}(E, \nabla)+i_2(\phi),$$
where $\wh{H}^{\even}(X; \R/\Q)$ is the $\R/\Q$ Cheeger-Simons differential characters
\cite{CS85}, $\E=(E, h, \nabla, \phi)\in\wh{K}_{\FL}(X)$, $\wh{\ch}(E, \nabla)$ is the
differential Chern character defined in \cite[\S 4]{CS85}, and \dis{i_2:
\frac{\Omega^{\odd}(X)}{\Omega^{\odd}_\Q(X)}\to\wh{H}^{\even}(X; \R/\Q)} is an injective
homomorphism defined by \dis{i_2(\omega)(z):=\int_z\omega\mod\Q} for $z\in Z_{\even}(X)$
\cite[Theorem 1.1]{CS85}.
\subsection{The Freed-Lott differential analytic index}
In this subsection we review the construction of the Freed-Lott differential analytic
index. Consider the following diagram:
$$\xymatrix{\scriptstyle (E, h, \nabla) \ar[ddr] & \scriptstyle(S^VX, \wh{\nabla}^{T^VX})
\ar@/^/[dd] & \scriptstyle(L^VX, \nabla^{L^VX}) \ar[ddl] & \scriptstyle\pi_*E \ar@/^/[dd]
\\ \scriptstyle(TX, g^{TX}, \nabla^{TX}) \ar[dr] & \scriptstyle(T^VX, g^{T^VX},
\nabla^{T^VX})\ar[d] & \scriptstyle (T^HX, \pi^*g^{TB}) \ar[dl] & \scriptstyle(\ker(D^V),
h^{\ker(D^V)}, \nabla^{\ker(D^V)} \ar[d] & \\ & \scriptstyle X \ar[rr]_\pi & & \scriptstyle
(B, g^{TB})}$$
In this diagram, $\pi:X\to B$ is a proper submersion with closed fibers of even relative
dimension and $T^VX\to X$ is the vertical tangent bundle, which is assumed to have a metric
$g^{T^VX}$. $T^HX\to X$ is a horizontal distribution, $g^{TB}$ is a Riemannian metric on
$B$, the metric on $TX\to X$ is defined by $g^{TX}:=g^{T^VX}\oplus\pi^*g^{TB}$, $\nabla^{TX}$
is the corresponding Levi-Civita connection, and $\nabla^{T^VX}:=P\circ\nabla^{TX}\circ P$
is a connection on $T^VX\to X$, where $P:TX\to T^VX$ is the orthogonal projection.
$T^VX\to X$ is assumed to have a $\spin^c$ structure. Denote by $S^VX\to X$ the
$\spin^c$-bundle associated to the characteristic Hermitian line bundle $L^V\to X$ with
a unitary connection. The connections on $T^VX\to X$ and $L^VX\to X$ induce a connection
$\wh{\nabla}^{T^VX}$ on $S^VX\to X$. Define an even form $\todd(\wh{\nabla}^{T^VX})
\in\Omega^{\even}(X)$ by
$$\todd(\wh{\nabla}^{T^VX})=\wh{A}(\nabla^{T^VX})\wedge e^{\frac{1}{2}c_1
(\nabla^{L^VX})}.$$
The modified pushforward of forms $\pi_*:\Omega^{\odd}(X)\to\Omega^{\odd}(B)$ is defined
by
$$\pi_*(\phi)=\int_{X/B}\todd(\wh{\nabla}^{T^VX})\wedge\phi.$$
The Freed-Lott differential analytic index $\ind^{\an}:\wh{K}_{\FL}(X)\to\wh{K}_{\FL}
(B)$ \cite[Definition 3.11]{FL10} is defined by
$$\ind^{\an}(\E)=(\ker(D^E), h^{\ker(D^E)}, \nabla^{\ker(D^E)}, \pi_*(\phi)+\wt{\eta}(\E)),$$
where $\E=(E, h, \nabla, \phi)\in\wh{K}_{\FL}(X)$, $\wt{\eta}(\E)$ is the Bismut-Cheeger
eta form \cite{BC86} characterized, up to exact form, by
$$d\wt{\eta}(\E)=\int_{X/B}\todd(\wh{\nabla}^{T^VX})\wedge\ch(\nabla)-\ch(\nabla^{\ker(D^E)}),$$
$D^E$ is the family of Dirac operators on $S^VX \otimes E$, and $\ker(D^E)$ is assumed to
form a superbundle over $B$.
\section{Main results}
\subsection{Explicit isomorphisms between $\wh{K}_{\FL}$ and $\wh{K}_{\SSSS}$}
In this subsection we construct explicit isomorphisms between the Simons-Sullivan
differential $K$-group and the Freed-Lott differential $K$-group.
\begin{thm}\label{thm 1}
Let $X$ be a compact manifold. The maps
$$f:\wh{K}_{\SSSS}(X)\to\wh{K}_{\FL}(X),\qquad g:\wh{K}_{\FL}(X)\to\wh{K}_{\SSSS}(X)$$
defined by
\begin{displaymath}
\begin{split}
f([E, h^E, [\nabla^E]]-[F, h^F, [\nabla^F]])&=(E, h^E, \nabla^E, 0)-(F, h^F, \nabla^F, 0),\\
g(E, h^E, \nabla^E, \phi)&=[E, h^E, [\nabla^E]]+[V, h^V, [\nabla^V]]-[\dim(V), h, [d]],
\end{split}
\end{displaymath}
where $\V=(V, h^V, [\nabla^V])\in\wh{\CS}^{-1}(\phi)$, are well defined ring
isomorphisms, with $f^{-1}=g$. Moreover, $f$ is natural and unique
\cite[Theorem 3.10]{BS10a}, and is compatible with the structure maps $i$, $j$, $\delta$
and $\ch_{\wh{K}_{\SSSS}}$ in (\ref{eq a}).
\end{thm}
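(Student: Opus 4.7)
My plan is to verify the four substantive claims in the statement: (a) $f$ and $g$ are well-defined on classes, (b) they are mutual inverses, (c) they are ring maps, and (d) they intertwine the structure maps of the hexagon (\ref{eq a}). Naturality of $f$ is by inspection of the formulas, and uniqueness is a direct appeal to \cite[Theorem 3.10]{BS10a}. The central tool throughout is the injective homomorphism $\Gamma\circ\wh{\CS}^{-1}:\Omega^{\odd}(X)/\Omega_{\U}(X)\to\wh{K}_{\SSSS}(X)$ from Section 2, combined with the defining Freed-Lott relation on quadruples.

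For (a), well-definedness of $f$ is straightforward: if $\CS(\nabla^E,\nabla^{E'})=0$ then $(E,h^E,\nabla^E,0)=(E,h^E,\nabla^{E'},0)$ in $\wh{K}_{\FL}(X)$ by the FL relation with $F=0$, and a Simons-Sullivan stabilization by $(G,h^G,[\nabla^G])$ becomes an FL relation with $F=G$, both form-corrections being zero. For $g$, independence of the choice of $\V\in\wh{\CS}^{-1}(\phi)$ holds because $[\V]-[\dim(\V)]=\Gamma(\V)$ factors through $\st_{\ST}(X)/\st_{\SF}(X)$, on which $\wh{\CS}$ is injective. Descent of $g$ through the FL equivalence requires more care: given witnesses $F$, an isomorphism $\sigma:E\oplus F\to E'\oplus F$, and $\phi-\phi'=\CS(\nabla^{E'}\oplus\nabla^F,\nabla^E\oplus\nabla^F)$, I would use the cocycle identity $\CS(\nabla_1,\nabla_2)+\CS(\nabla_2,\nabla_3)\equiv\CS(\nabla_1,\nabla_3)$ modulo exact forms to reduce to an equality in $\st_{\ST}(X)/\st_{\SF}(X)$ after stabilizing both sides of the $\wh{K}_{\SSSS}$ formula by $(F,h^F,[\nabla^F])$.

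For (b), to show $f\circ g=\id$ on a generator $\E=(E,h,\nabla,\phi)$, write $g(\E)=[E,h,[\nabla]]+\Gamma(\V)$ with $\wh{\CS}(\V)=\phi$. Applying $f$ yields $(E,h,\nabla,0)+(V,h^V,\nabla^V,0)-(\dim(V),h,d,0)$; the stable triviality $V\oplus F\cong H$ with $F,H$ trivial and flat, together with the FL relation applied to the canonical split exact sequence, collapses the last two summands to $(0,0,0,\CS(\nabla^V\oplus\nabla^F,\nabla^H))=(0,0,0,\phi)$, giving $\E$. To show $g\circ f=\id$ on a generator $[\V]$, one has $g(f([\V]))=[\V]+\Gamma(\V')$ for any $\V'\in\wh{\CS}^{-1}(0)$; taking $\V'$ to be the rank-zero bundle gives $\Gamma(\V')=0$. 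For (c), additivity is immediate from the definitions of direct sum, and multiplicativity follows because the FL product correction term $\phi_1\wedge\ch(\nabla_2)+\ch(\nabla_1)\wedge\phi_2-d\phi_1\wedge\phi_2$ vanishes when $\phi_1=\phi_2=0$.

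For (d), I would verify each square of (\ref{eq a}) separately: compatibility with $i$ follows from the computation used in $f\circ g=\id$; the forgetful map $\delta$ is intertwined because the underlying vector bundle is preserved; the Chern character formulas $\ch_{\wh{K}_{\SSSS}}([E,h,[\nabla]])=\ch(\nabla)$ and $\wh{\ch}_{\FL}(E,h,\nabla,0)=\wh{\ch}(E,\nabla)$ agree after projection to $\Omega^{\even}_{\BU}(X)$; and compatibility with $j$ reduces to the fact that both theories identify $K^{-1}(X;\R/\Z)$ with their respective flat parts. The main obstacle is the descent step in (a), where reconciling independently chosen lifts $\V\in\wh{\CS}^{-1}(\phi)$ and $\V'\in\wh{\CS}^{-1}(\phi')$ under the FL equivalence requires careful bookkeeping via the Chern-Simons cocycle identity and stabilization by $F$; once this is set up, the remaining verifications are routine.
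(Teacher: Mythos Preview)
Your proposal is correct and follows essentially the same approach as the paper: check well-definedness of $f$ and $g$, verify they are mutual inverses, and observe that $f$ is obviously a natural ring map (the paper does not spell out compatibility with the hexagon maps beyond asserting it in the statement). The only refinement to flag in your descent step for $g$ is that stabilizing by the FL-witness bundle alone is not quite enough to produce the needed isomorphism of structured bundles---the paper's explicit stabilizer is $G'=G\oplus V'\oplus W'$, where $V',W'$ are trivial bundles witnessing the stable triviality of $V$ and $W$, after which the bundle isomorphism and the vanishing of the Chern--Simons class follow by a direct cocycle computation.
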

\begin{proof}
First we show that the maps $f$ and $g$ are well defined. For the map $f$, if
$[E_1, h^{E_1}, [\nabla^{E_1}]]-[F_1, h^{F_1}, [\nabla^{F_1}]]=[E_2, h^{E_2},
[\nabla^{E_2}]]-[F_2, h^{F_2}, [\nabla^{F_2}]]$ in $\wh{K}_{\SSSS}(X)$, then
$$(E_1, h^{E_1}, \nabla^{E_1}, 0)-(F_1, h^{F_1}, \nabla^{F_1}, 0)=(E_2, h^{E_2},
\nabla^{E_2}, 0)-(F_2, h^{F_2}, \nabla^{F_2}, 0),$$
since there exists $(G, h^G, [\nabla^G])\in\st(X)$ such that
$E_1\oplus F_2\oplus G\cong F_1\oplus E_2\oplus G$ and
$$0=\CS(\nabla^{E_1}\oplus\nabla^{F_2}\oplus\nabla^G, \nabla^{F_1}\oplus\nabla^{E_2}
\oplus\nabla^G)=\CS(\nabla^{E_1}\oplus\nabla^{F_2}, \nabla^{F_1}\oplus\nabla^{E_2}).$$
It follows that the map $f$ is well defined.\\\\
For the map $g$, if $(E, h^E, \nabla^E, \phi)=(F, h^F, \nabla^F, \omega)$ in
$\wh{K}_{\FL}(X)$, then there exists $(G, h^G, \nabla^G, \phi^G)\in\wh{K}_{\FL}(X)$
such that $E\oplus G\cong F\oplus G$ and $\phi-\omega=\CS(\nabla^F\oplus\nabla^G,
\nabla^E\oplus\nabla^G)$. We want
\begin{displaymath}
\begin{split}
&~~~~[E, h^E, [\nabla^E]]+[V, h^V, [\nabla^V]]-[\dim(V), h, [d]]\\
&=[F, h^F, [\nabla^F]]+[W, h^W, [\nabla^W]]-[\dim(W), h, [d]],
\end{split}
\end{displaymath}
where $\wh{\CS}(\V)=\phi$ and $\wh{\CS}(\W)=\omega$. We need to show that there
exists $(G', h^{G'}, [\nabla^{G'}])\in\st(X)$ such that
\begin{equation}\label{eq 1'}
\begin{split}
&~~~~(E, h^E, [\nabla^E])+(V, h^V, [\nabla^V])+(\dim(W), h, [d])+(G', h^{G'}, [\nabla^{G'}])\\
&=(F, h^F, [\nabla^F])+(W, h^W,[\nabla^W])+(\dim(V), h, [d])+(G', h^{G'}, [\nabla^{G'}]),
\end{split}
\end{equation}
and $\CS(\nabla^E\oplus\nabla^V\oplus d^W\oplus\nabla^{G'}, \nabla^F\oplus\nabla^W
\oplus d^V\oplus\nabla^{G'})=0$. (\ref{eq 1'}) is equivalent to
\begin{displaymath}
\begin{split}
&~~~~(E\oplus V\oplus\dim(W)\oplus G', h^E\oplus h^V\oplus h\oplus h^{G'}, [\nabla^E\oplus
\nabla^V\oplus d\oplus\nabla^{G'}])\\
&=(F\oplus W\oplus\dim(V)\oplus G', h^F\oplus h^W\oplus h\oplus h^{G'}, [\nabla^F\oplus
\nabla^W\oplus d\oplus\nabla^{G'}]).
\end{split}
\end{displaymath}
Since $V$ and $W$ are stably trivial, there exist trivial bundles $V'$ and $W'$ with
connections $\nabla^{V'}$ and $\nabla^{W'}$ such that
$$H^V:=\dim(V)\oplus V'=V\oplus V',\qquad H^W:=\dim(W)\oplus W'=W\oplus W',$$
and
$$\phi=\CS(\nabla^V\oplus\nabla^{V'}, \nabla^{H^V}),\qquad\omega=\CS(\nabla^W\oplus\nabla^{W'},
\nabla^{H^W}).$$
By taking $G'=G\oplus V'\oplus W'$, we have
\begin{equation}\label{eq 2}
\begin{split}
&~~~~(E\oplus V\oplus\dim(W))\oplus(G\oplus V'\oplus W')\\
&\cong(E\oplus G)\oplus(V\oplus V')\oplus(\dim(W)\oplus W')\\
&\cong(F\oplus G)\oplus(\dim(V)\oplus V')\oplus(W\oplus W')\\
&\cong(F\oplus W\oplus\dim(V))\oplus(G\oplus V'\oplus W')
\end{split}
\end{equation}
and for $d^V$, $d^W$ the trivial connections on $\dim(V)$, $\dim(V)$, respectively,
\begin{equation}\label{eq 3}
\begin{split}
&~~~~\CS(\nabla^E\oplus\nabla^V\oplus d^W\oplus\nabla^G\oplus\nabla^{V'}\oplus\nabla^{W'},
\nabla^F\oplus\nabla^W\oplus d^V\oplus\nabla^G\oplus\nabla^{V'}\oplus\nabla^{W'})\\
&=\CS(\nabla^E\oplus\nabla^G, \nabla^F\oplus\nabla^G)+\CS(\nabla^V\oplus\nabla^{V'},
d^V\oplus\nabla^{V'})+\CS(d^W\oplus\nabla^{W'}, \nabla^W\oplus\nabla^{W'})\\
&=-\phi+\omega+\CS(\nabla^V\oplus\nabla^{V'}, \nabla^{H^V})+\CS(\nabla^{H^W}, \nabla^W
\oplus\nabla^{W'})\\
&=-\phi+\omega+\phi-\omega\\
&=0
\end{split}
\end{equation}
(\ref{eq 2}) and (\ref{eq 3}) imply (\ref{eq 1'}), so the map
$g:\wh{K}_{\FL}(X)\to\wh{K}_{\SSSS}(X)$ is well defined.\\\\
We now show that $f$ and $g$ are inverses. Note that
\begin{displaymath}
\begin{split}
(g\circ f)([E, h^E, [\nabla^E]]-[F, h^F, [\nabla^F]])&=g((E, h^E, \nabla^E, 0)-(F, h^F,
\nabla^F, 0))\\
&=[E, h^E, [\nabla^E]]-[F, h^F, [\nabla^F]]
\end{split}
\end{displaymath}
as \dis{\wh{\CS}^{-1}(0)=0\in\frac{\st_{\ST}(X)}{\st_{\SF}(X)}}. For the other direction,
we consider
$$(f\circ g)(E, h^E, \nabla^E, \phi)=(E, h^E, \nabla^E, 0)+(V, h^V, \nabla^V, 0)-(\dim(V),
h, d, 0),$$
where $\wh{\CS}(\V)=\phi$ for $\V:=(V, h^V, [\nabla^V])\in\st_{\ST}(X)$. It suffices
to show
$$(E, h^E, \nabla^E, \phi)+(\dim(V), h, d^V, 0)=(E, h^E, \nabla^E, 0)+(V, h^V, \nabla^V,
0),$$
which is equivalent to
\begin{equation}\label{eq 3'}
(E\oplus\dim(V), h^E\oplus h, \nabla^E\oplus d^V, \phi)=(E\oplus V, h^E\oplus h^V, \nabla^E
\oplus\nabla^V, 0).
\end{equation}
To see this, since $\V=(V, h^V, [\nabla^V])$ is stably trivial, there exist trivial
structured bundles $\F=(F, h, [d^F])$ and $\HH=(H, h, [d^H])$ such that $V\oplus F\cong H$
and $\phi=\CS(d^H, \nabla^V\oplus d^F)$. Thus
$E\oplus\dim(V)\oplus\dim(F)\cong E\oplus\dim(H)\cong E\oplus V\oplus F$, and
\begin{displaymath}
\begin{split}
\CS(\nabla^E\oplus\nabla^V, \nabla^E\oplus d^V)&=\CS(\nabla^E\oplus\nabla^V\oplus d^F,
\nabla^E\oplus d^V\oplus d^F)\\
&=\CS(\nabla^E\oplus\nabla^V\oplus d^F, \nabla^E\oplus d^H)=\phi.
\end{split}
\end{displaymath}
This proves (\ref{eq 3'}).\\\\
$f$ is obviously a natural ring homomorphism. Since $g=f^{-1}$, $g$ is also a ring
homomorphism.
\end{proof}
The following corollary follows from the compatibility of $f$ and $\ch_{\wh{K}_{\SSSS}}$.
\begin{coro}\label{coro 1}
Let $X$ be a compact manifold. The following diagram is commutative.
\cdd{0 @>>> K^{-1}_{\SSSS}(X; \R/\Z) @>j>> \wh{K}_{\SSSS}(X) @>\ch>> \Omega_{\BU}(X) @>>>
0 \\ & & @V\bar{f}VV @VfVV @V=VV \\ 0 @>>> K^{-1}_{\LL}(X; \R/\Z) @>>j'> \wh{K}_{\FL}(X)
@>>\omega> \Omega_{\BU}(X) @>>> 0}
where $\bar{f}$ is the restriction of $f$ to $K^{-1}_{\SSSS}(X; \R/\Z)$. Here
$\omega:\wh{K}_{\FL}(X)\to\Omega_{\BU}(X)$ is defined by $\omega(E, h^E, \nabla^E, \phi)
=\ch(\nabla^E)+d\phi$.
\end{coro}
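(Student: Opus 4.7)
The plan is to extract the corollary directly from Theorem~\ref{thm 1} together with the standard exact sequence for the flat part of a differential $K$-theory, which is one of the diagonals of the hexagon~(\ref{eq a}).

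First I would verify commutativity of the right square. For a generator $[E, h^E, [\nabla^E]] - [F, h^F, [\nabla^F]] \in \wh{K}_{\SSSS}(X)$, the map $\ch_{\wh{K}_{\SSSS}}$ sends it to $\ch(\nabla^E) - \ch(\nabla^F) \in \Omega^{\even}_{\BU}(X)$. On the other hand,
$$\omega \circ f\bigl([E, h^E, [\nabla^E]] - [F, h^F, [\nabla^F]]\bigr) = \omega\bigl((E, h^E, \nabla^E, 0) - (F, h^F, \nabla^F, 0)\bigr) = \ch(\nabla^E) - \ch(\nabla^F),$$
so the right square commutes tautologically from the definitions of $f$ and $\omega$.

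Next, I would recall that in both models the $K^{-1}(\cdot;\R/\Z)$ factor is characterized as the kernel of the Chern-character-to-forms map: indeed $j$ in the hexagon~(\ref{eq a}) is precisely the inclusion $\ker(\ch_{\wh{K}_{\SSSS}}) \hookrightarrow \wh{K}_{\SSSS}(X)$, and likewise $j'$ identifies $K^{-1}_{\LL}(X;\R/\Z)$ with $\ker(\omega)$. Because the right square commutes and $f$ is an isomorphism, $f$ restricts to an isomorphism $\bar f : \ker(\ch_{\wh{K}_{\SSSS}}) \to \ker(\omega)$. Defining $\bar f$ this way makes the left square commute by construction, and the exactness of both horizontal sequences follows from exactness of the corresponding diagonals of the hexagon in each model.

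There is no real obstacle here: the only thing to be careful about is the consistent identification of the kernels with $K^{-1}(X;\R/\Z)$ in the two models, which is standard. The corollary is essentially a restatement of the fact that a morphism of differential extensions compatible with the curvature/Chern-character map automatically restricts to a morphism on flat parts.
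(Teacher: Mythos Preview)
Your proposal is correct and takes essentially the same approach as the paper, which simply records that the corollary follows from the compatibility of $f$ with $\ch_{\wh{K}_{\SSSS}}$ asserted in Theorem~\ref{thm 1}, together with the known exactness of the two horizontal sequences. Your write-up merely spells out in detail what that compatibility means: $\omega\circ f=\ch_{\wh{K}_{\SSSS}}$, hence $f$ carries $\ker(\ch_{\wh{K}_{\SSSS}})$ isomorphically onto $\ker(\omega)$, which is exactly the content of the left square.
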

Note that the horizontal sequences are exact by \cite{FL10}, \cite{SS10}.
\subsection{The differential analytic index in $\wh{K}_{\SSSS}$}
In this subsection we give the formula for the differential analytic index in
Simons-Sullivan differential $K$-theory.\\\\
Let $\pi:X\to B$ be a proper submersion of even relative dimension and its fibers
are assumed to be $\spin^c$. The differential analytic index in Simons-Sullivan
differential $K$-theory is given by forcing the following diagram to be commutative:
\cdd{\wh{K}_{\SSSS}(X) @>f>> \wh{K}_{\FL}(X) \\ @V\ind^{\an}_{\SSSS}VV
@VV\ind^{\an}_{\FL}V \\ \wh{K}_{\SSSS}(B) @<<g< \wh{K}_{\FL}(B)}
Let $\E:=[E, h^E, [\nabla]]\in\wh{K}_{\SSSS}(X)$. Since
\begin{displaymath}
\begin{split}
(g\circ\ind^{\an}_{\FL}\circ f)(\E)&=[\ker(D^E), h^{\ker(D^E)}, [\nabla^{\ker(D^E)}]]\\
&\qquad+[V, h^V, [\nabla^V]]-[\dim(V), h, [d]],
\end{split}
\end{displaymath}
where \dis{\V:=(V, h^V, [\nabla^V])\in\frac{\st_{\ST}(B)}{\st_{\SF}(B)}} is uniquely
determined by the condition \dis{\wh{\CS}(\V)=\wt{\eta}(\E)\mod\frac{\Omega_{\U}
(B)}{\Omega^{\odd}_{\exact}(B)}}, it follows that the differential analytic index in
the Simons-Sullivan differential $K$-theory $\ind^{\an}_{\SSSS}:\wh{K}_{\SSSS}(X)\to
\wh{K}_{\SSSS}(B)$ is given by
\begin{equation}\label{eq 21}
\ind^{\an}_{\SSSS}(\E)=[\ker(D^E), h^{\ker(D^E)}, [\nabla^{\ker(D^E)}]]
+[V, h^V, [\nabla^V]]-[\dim(V), h, [d]],
\end{equation}
where $\ker(D^E)$ is assumed to form a superbundle over $B$. Although
$\V:=\wh{\CS}^{-1}(\wt{\eta}(\E))$ is uniquely determined up to a stably flat structured
bundle, its class $[\V]\in\wh{K}_{\SSSS}(B)$ is unique since the differential $K$-theory
class of a stably flat structured bundle is zero. Moreover, since $\ind^{\an}_{\FL}$ is
well defined (see \cite[Proposition 1]{H11a} for a proof which does not use the
differential family index theorem), it follows that $\ind^{\an}_{\SSSS}$ is well defined
too.\\\\
If one defines the Simons-Sullivan differential analytic index $\ind^{\an}_{\SSSS}$
without considering the other differential analytic indices, a natural candidate
would be, say, in the special case when $\ker(D^E)\to B$ is a superbundle,
$$\ind^{\an}_{\SSSS}(\E)=[\ker(D^E), h^{\ker(D^E)}, [\nabla^{\ker(D^E)}]].$$
This definition coincides with (\ref{eq 21}) if and only if $\V\in\st_{\SF}(B)$, which
is equivalent to saying that $\wt{\eta}(\E)\in\Omega_{\U}(B)=\Omega^{\odd}_{\BU}(B)$.
However, this is not true since
$$d\wt{\eta}(\E)=\int_{X/B}\todd(\wh{\nabla}^{T^VX})\wedge\ch(\nabla)-\ch(\nabla^{\ker
(D^E)}),$$
which shows that $\wt{\eta}(\E)$ is not closed in general.
\begin{lemma}\label{lemma 1}
Let $\E=[E, h, [\nabla]]\in\wh{K}_{\SSSS}(X)$. Then
$$\ch_{\wh{K}_{\SSSS}}(\ind^{\an}_{\SSSS}(\E))=\ch(\nabla^{\ker(D^E)})+d\wt{\eta}(\E).$$
\end{lemma}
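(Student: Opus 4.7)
The plan is to reduce the identity to the corresponding fact in Freed-Lott differential $K$-theory via the explicit ring isomorphism $f$ of Theorem \ref{thm 1}. The key observation is that two diagrams already in hand do all the work: the defining square $f\circ\ind^{\an}_{\SSSS}=\ind^{\an}_{\FL}\circ f$, and the compatibility square of Corollary \ref{coro 1}, which asserts $\omega\circ f=\ch_{\wh{K}_{\SSSS}}$, where $\omega(F,h^F,\nabla^F,\psi)=\ch(\nabla^F)+d\psi$.

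First I would evaluate $f$ on $\ind^{\an}_{\SSSS}(\E)$. Since $f(\E)=(E,h,\nabla,0)$ and $\pi_*(0)=0$, the formula for $\ind^{\an}_{\FL}$ recalled in Section 3 gives
\[
f(\ind^{\an}_{\SSSS}(\E))=\ind^{\an}_{\FL}(E,h,\nabla,0)=(\ker(D^E),h^{\ker(D^E)},\nabla^{\ker(D^E)},\wt{\eta}(\E)).
\]
Second, applying $\omega$ to this Freed-Lott class and invoking Corollary \ref{coro 1} yields
\[
\ch_{\wh{K}_{\SSSS}}(\ind^{\an}_{\SSSS}(\E))=\omega\bigl(f(\ind^{\an}_{\SSSS}(\E))\bigr)=\ch(\nabla^{\ker(D^E)})+d\wt{\eta}(\E),
\]
which is exactly the desired identity.

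There is no genuine obstacle: the content of the lemma is completely absorbed by Theorem \ref{thm 1} and the diagram-chase definition of $\ind^{\an}_{\SSSS}$. For readers preferring a direct argument bypassing $f$, one can instead apply additivity of $\ch_{\wh{K}_{\SSSS}}$ to the formula (\ref{eq 21}), obtaining $\ch(\nabla^{\ker(D^E)})+\ch(\nabla^V)-\rk(V)$, and then invoke the identity $d\wh{\CS}(\V)=\ch(\nabla^V)-\rk(V)$ together with $\wh{\CS}(\V)=\wt{\eta}(\E)$ recorded in Section 2 to match the two expressions. Either route is essentially a one-line verification.
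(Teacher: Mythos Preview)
The paper states Lemma \ref{lemma 1} without proof, treating it as an immediate consequence of the definitions. Your proposal is correct and supplies exactly the verification the paper omits. Both routes you give are valid: the first uses the compatibility $\omega\circ f=\ch_{\wh{K}_{\SSSS}}$ from Corollary \ref{coro 1} together with the defining relation $\ind^{\an}_{\SSSS}=g\circ\ind^{\an}_{\FL}\circ f$; the second reads the result directly from (\ref{eq 21}) and the identity $d\phi=\ch(\nabla^V)-\rk(V)$ for $\phi=\wh{\CS}(\V)$ recorded in Section 2. Either is the intended one-line check, and the second is presumably what the paper had in mind given its placement immediately after (\ref{eq 21}).
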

It follows from Lemma \ref{lemma 1} and the local family index theorem that
\begin{displaymath}
\begin{split}
\ch_{\wh{K}_{\SSSS}}(\ind^{\an}_{\SSSS}(\E))&=\ch(\nabla^{\ker(D^E)})+d\wt{\eta}(\E)\\
&=\int_{X/B}\todd(\wh{\nabla}^{T^VX})\wedge\ch(\nabla^E)\\
&=\pi_*(\ch_{\wh{K}_{\SSSS}}(\E)).
\end{split}
\end{displaymath}
We define the Simons-Sullivan differential Chern character $\wh{\ch}_{\SSSS}:\wh{K}_{\SSSS}
(X)\to\wh{H}^{\even}(X; \R/\Q)$ by
$$\wh{\ch}_{\SSSS}(\E):=\wh{\ch}(E, \nabla),$$
where $\E=[E, h, [\nabla]]$.\\\\
It is instructive to note that the following diagram commute,
$$\xymatrix{ & \wh{K}_{\SSSS}(X) \ar[dd]^f \\ \displaystyle\frac{\Omega^{\odd}
(X)}{\Omega^{\odd}_{\BU}(X)} \ar[ur]^i \ar[dr]_j & \\ & \wh{K}_{\FL}(X)}$$
where $f:\wh{K}_{\SSSS}(X)\to\wh{K}_{\FL}(X)$ is the isomorphism given by Theorem \ref{thm 1}.
\subsection{The differential Grothendieck-Riemann-Roch theorem}
In this subsection we prove the dGRR in Simons-Sullivan differential $K$-theory. To be
precise, we first prove the special case that the family of kernels $\ker(D^E)$ forms
a superbundle by a theorem of Bismut reviewed below. The general case follows from the
standard perturbation argument as in \cite[\S 7]{FL10}.\\\\
We now recall Bismut's theorem. For the geometric construction of the analytic index
given in \S 4.2, with the fibers assumed to be $\spin$, and $\ker(D^E)\to B$ assumed to
form a superbundle, we have
\begin{equation}\label{eqiai 11}
\wh{\ch}(\ker(D^E), \nabla^{\ker(D^E)})+i_2(\wt{\eta})=\wh{\int_{X/B}}
\wh{\wh{A}}(T^VX, \nabla^{T^VX})\ast\wh{\ch}(E, \nabla^E)
\end{equation}
\cite[Theorem 1.15]{B05}, where \dis{\wh{\int_{X/B}}} is the pushforward of
differential characters for proper submersion \cite[\S 8.3]{FL10}, $\ast$ is the
multiplication of differential characters \cite[\S 1]{CS85}, and $\wh{\wh{A}}(T^VZ,
\nabla^{T^VX})\in\wh{H}^{\even}(X; \R/\Q)$ is the differential character associated to
the $\wh{A}$-class (see \cite[\S 2]{CS85}). If the fibers are assumed to be $\spin^c$,
(\ref{eqiai 11}) has the obvious modification, and in our notation becomes
\begin{equation}\label{eqiai 12}
\wh{\ch}(\ker(D^E), \nabla^{\ker(D^E)})+i_2(\wt{\eta})=\wh{\int_{X/B}}\wh{\todd}(T^VX,
\wh{\nabla}^{T^VX})\ast\wh{\ch}(E, \nabla^E),
\end{equation}
for $\wh{\todd}(T^VX, \wh{\nabla}^{T^VX})\in\wh{H}^{\even}(X; \R/\Q)$ the differential
character associated to the Todd class (see \cite[\S 2]{CS85}). We will write
$\wh{\todd}(T^VX, \wh{\nabla}^{T^VX})$ as $\wh{\todd}(\wh{\nabla}^{T^VX})$ in the
sequel. Note that (\ref{eqiai 11}) and (\ref{eqiai 12}) extend to the general case
where $\ker(D^E)\to B$ does not form a bundle \cite[p. 23]{B05}.
\begin{thm}[\bf Differential Grothendieck-Riemann-Roch theorem]\label{iaithm 3}
Let $\pi:X\to B$ be a proper submersion with closed $\spin^c$-fibers of even dimension.
The following diagram is commutative:
\cdd{\wh{K}_{\SSSS}(X) @>\wh{\ch}_{\SSSS}>> \wh{H}^{\even}(X; \R/\Q) \\
@V\ind^{\an}_{\SSSS}VV @VV\wh{\int_{X/B}}\wh{\todd}(\wh{\nabla}^{T^VX})\ast(\cdot) V \\
\wh{K}_{\SSSS}(B) @>>\wh{\ch}_{\SSSS}> \wh{H}^{\even}(B; \R/\Q)}
i.e., if $\E=[E, h, [\nabla^E]]\in\wh{K}_{\SSSS}(X)$, then
$$\wh{\ch}_{\SSSS}(\ind^{\an}_{\SSSS}(\E))=\wh{\int_{X/B}}\wh{\todd}(\nabla^{T^VX})
\ast\wh{\ch}_{\SSSS}(\E).$$
\end{thm}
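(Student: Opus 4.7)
My plan is to reduce the theorem to Bismut's identity (\ref{eqiai 12}) via the ring isomorphism $f:\wh{K}_{\SSSS}(X)\to\wh{K}_{\FL}(X)$ of Theorem \ref{thm 1} and the defining relation $f\circ\ind^{\an}_{\SSSS}=\ind^{\an}_{\FL}\circ f$. The key preliminary step is the compatibility of the two differential Chern characters under $f$: on a generator $\W=[W,h^W,[\nabla^W]]$ one has $f(\W)=(W,h^W,\nabla^W,0)$, so
\begin{displaymath}
\wh{\ch}_{\FL}(f(\W))=\wh{\ch}(W,\nabla^W)+i_2(0)=\wh{\ch}_{\SSSS}(\W).
\end{displaymath}
Since both $\wh{\ch}_{\SSSS}$ and $\wh{\ch}_{\FL}\circ f$ are additive, this identity extends to arbitrary classes in $\wh{K}_{\SSSS}$, and in particular applies termwise to the formal combination $[\ker(D^E)]+[V]-[\dim(V)]$ representing $\ind^{\an}_{\SSSS}(\E)$ in (\ref{eq 21}).

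Granting this, for $\E=[E,h,[\nabla^E]]\in\wh{K}_{\SSSS}(X)$ I would chain
\begin{displaymath}
\wh{\ch}_{\SSSS}(\ind^{\an}_{\SSSS}(\E))=\wh{\ch}_{\FL}(f(\ind^{\an}_{\SSSS}(\E)))=\wh{\ch}_{\FL}(\ind^{\an}_{\FL}(f(\E))).
\end{displaymath}
Since $f(\E)=(E,h,\nabla^E,0)$ has vanishing form component, $\ind^{\an}_{\FL}(f(\E))=(\ker(D^E),h^{\ker(D^E)},\nabla^{\ker(D^E)},\wt{\eta}(\E))$, whose Freed--Lott Chern character equals $\wh{\ch}(\ker(D^E),\nabla^{\ker(D^E)})+i_2(\wt{\eta}(\E))$. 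Bismut's formula (\ref{eqiai 12}) identifies this with $\wh{\int_{X/B}}\wh{\todd}(\wh{\nabla}^{T^VX})\ast\wh{\ch}(E,\nabla^E)$, and one final application of the compatibility to $\E$ rewrites the form factor as $\wh{\ch}_{\SSSS}(\E)$, yielding the desired equality.

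The main potential obstacle is not conceptual but bookkeeping: the compatibility $\wh{\ch}_{\SSSS}=\wh{\ch}_{\FL}\circ f$ must be checked on the nontrivial class $\ind^{\an}_{\SSSS}(\E)=[\ker(D^E)]+[V]-[\dim(V)]$. Unpacked, this reduces to the identity $\wh{\ch}(V,\nabla^V)-\rk(V)=i_2(\wt{\eta}(\E))$, which follows from the Venice-lemma relation $\wh{\CS}(\V)=\wt{\eta}(\E)$ and the basic transgression property of the Cheeger--Simons differential character; alternatively, it is already forced by the equality $f(\ind^{\an}_{\SSSS}(\E))=\ind^{\an}_{\FL}(f(\E))$ in $\wh{K}_{\FL}(B)$ that is built into the definition of $\ind^{\an}_{\SSSS}$. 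Finally, the general case where $\ker(D^E)$ fails to form a superbundle follows by the standard perturbation argument of \cite[\S 7]{FL10}, combined with the extension of Bismut's theorem to the general setting noted after (\ref{eqiai 12}).
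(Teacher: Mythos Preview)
Your proposal is correct and follows essentially the same approach as the paper: both reduce the statement to Bismut's identity (\ref{eqiai 12}) after identifying $\wh{\ch}_{\SSSS}(\ind^{\an}_{\SSSS}(\E))$ with $\wh{\ch}(\ker(D^E),\nabla^{\ker(D^E)})+i_2(\wt{\eta}(\E))$. The only organizational difference is that the paper obtains this identification by a direct Simons--Sullivan computation (its Proposition \ref{iaiprop 4}, which amounts to your identity $\wh{\ch}(V,\nabla^V)-\rk(V)=i_2(\wt{\eta}(\E))$), whereas you route through the compatibility $\wh{\ch}_{\SSSS}=\wh{\ch}_{\FL}\circ f$ and the defining relation $f\circ\ind^{\an}_{\SSSS}=\ind^{\an}_{\FL}\circ f$; these are two packagings of the same verification.
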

\begin{proof}
\begin{displaymath}
\begin{split}
&\qquad\wh{\ch}_{\SSSS}(\ind^{\an}_{\SSSS}(\E))\\
&=\wh{\ch}_{\SSSS}([\ker(D^E), h^{\ker(D^E)}, [\nabla^{\ker(D^E)}]]+[V, h^V, [\nabla^V]]
-[\dim(V), h, [d]])\\
&=\wh{\ch}(\ker(D^E), \nabla^{\ker(D^E)})+i_2(\wt{\eta}(\E))\\
&=\wh{\int_{X/B}}\wh{\todd}(\wh{\nabla}^{T^VX})\ast\wh{\ch}(E, \nabla^E)\\
&=\wh{\int_{X/B}}\wh{\todd}(\wh{\nabla}^{T^VX})\ast\wh{\ch}_{\SSSS}(\E)\\
\end{split}
\end{displaymath}
where the second equality follows from Proposition \ref{iaiprop 4} and the third
equality follows from (\ref{eqiai 12}).
\end{proof}
\bibliographystyle{amsplain}
\bibliography{MBib}
\end{document}